\providecommand{\customgenericname}{}
\newcommand{\newcustomtheorem}[2]{%
  \newenvironment{#1}[1]
  {%
   \renewcommand\customgenericname{#2}%
   \renewcommand\theinnercustomgeneric{##1}%
   \innercustomgeneric
  }
  {\endinnercustomgeneric}
}
\newtheorem{theorem}{Theorem}[section]
\newtheorem{lemma}[theorem]{Lemma}
\newtheorem{proposition}[theorem]{Proposition}
\providecommand{\customgenericname}{}
\crefname{equation}{Eq.}{Eqs.}
\Crefname{equation}{Equation}{Equations}
\newcommand\sci[2][1]{\ensuremath{\ifthenelse{\equal{#1}{1}}{}{#1\times}10^{#2}}}
\title{Gassner and Burau representations over $\mathbb{Z}_p$-modules}
\author{Vasudha Bharathram}
\date{\today}
\begin{document}

\begin{abstract}
We study two classical representations of Artin's braid group and their modulo $p$ reductions. We use topological methods to show that the Gassner representation $\tau_n: B_n\to\text{GL}_n(\mathbb{Z}[t_1^{\pm 1}, \ldots, t_n^{\pm 1}])$ is faithful for all $n$, and furthermore that it is faithful modulo $p$ for all integers $p>1$. We then give a novel proof that the Burau representation of $B_3$ is faithful modulo $p$ for all $p>1$, and suggest applications to the modulo $p$ Burau representation for higher braid groups. 
\end{abstract}

\subjclass[2020]{20F36, 20F65, 20C99}
\keywords{Braid group, Gassner representation, Burau representation}

\maketitle

\section{Introduction}

The classical braid groups $B_n$ were introduced by Artin in \cite{artin1925} and are now ubiquitous in mathematics. They have been found to have deep connections with a vast range of fields, from knot theory to statistical mechanics, and in addition admit a fascinating theory that is of great intrinsic interest. Over the past century, one of the problems in the area that has received particular attention is that of whether the braid groups are linear, a question that was ultimately settled in the positive by seminal work of Bigelow \cite{Big00} and Krammer \cite{Kra}. The Lawrence-Krammer representation of $B_n$ in $\text{GL}_{n(n-1)/2}(\mathbb{Z}[t^{\pm 1},q^{\pm 1}])$ that they both proved to be faithful is, at the time of this writing, the only known faithful linear representation of the braid groups. 

Before the Lawrence-Krammer representation was studied, however, the main contender for a faithful representation was the Burau representation $\rho_n: B_n\to\text{GL}_{n-1}(\mathbb{Z}[t, t^{-1}])$, introduced by Werner Burau in \cite{burau}. This representation has long been known to be faithful for $n\leq 3$ \cite{magnuspeluso} and was shown in the 1990s to be unfaithful for $n\geq 5$ \cite{moody, longpatton, bigelow}. In \cite{gassner} a natural generalisation of the Burau representation to the pure braid group $PB_n$, the {\it Gassner representation} $\tau_n: PB_n\to\text{GL}_{n-1}(\mathbb{Z}[t_1^{\pm 1}, \ldots, t_n^{\pm 1}])$, was introduced, and its faithfulness has remained unknown for $n\geq 4$. Our main result is an answer to this question, and an extension to an infinite family of faithful representations of each pure braid group $PB_n$ over $\mathbb{Z}_p[t_1^{\pm 1},\ldots,t_n^{\pm 1}]$:

\begin{customthm}{A}
    The Gassner representation $\tau_n$ is faithful for all $n$.
\end{customthm}

\begin{customthm}{B}
    The Gassner representation $\tau_n$ is faithful modulo $p$ for all $n$ and all integers $p>1$.
\end{customthm}

We will also apply the topological methods we used to study the Gassner representation to prove corresponding results regarding the Burau representation of $B_3$, giving a new proof that $\rho_3\otimes \mathbb{Z}_p$ is faithful for all $p>1$. While this result has been established, i.e. via \cite{Lee-Song}, we hope that the methods we use here will be expanded upon to investigate the representations $\rho_n\otimes \mathbb{Z}_p$ for higher $n$, which seems to be a very interesting and open area for future study.

{\bf Acknowledgments.} The author is deeply indebted to Joan Birman for guiding her since high school, for introducing her to this subject and to the world of mathematics, for many hours of helpful conversations without which the results in this paper would not have been proved, and for too many other things to list. The author is also very thankful to Tara Brendle for helpful conversations and suggestions, and for pointing out an error in an earlier version of this paper. 

\section{The Burau and Gassner representations}

\subsection{Topological definitions} 

The (reduced) Burau representation of the braid group $B_n$, given by $\rho_n: B_n \to \text{GL}_{n-1}(\mathbb{Z}[t, t^{-1}])$, was first introduced in 1936 by Werner Burau in \cite{burau}. The representation is now ubiquitous in the field, and the Gassner representation is a natural generalisation to the pure braid group $PB_n$. While both representations are generally defined in terms of their action on standard generating sets of $B_n$ and $PB_n$, we prefer here to consider their action on the mapping class group and pure mapping class group of the $n$-punctured disc. 

With that in mind, let $D$ be a disc and let $p_1, \ldots, PB_n$ be distinct points on that disc. Let $D_n$ denote the $n$-punctured disc $D\backslash \{p_1, \ldots, PB_n\}$ with basepoint $p_*$ on $\partial D$.  The braid group $B_n$ is the mapping class group of $D_n$. 

Now, let $x_1, \ldots, x_n$ denote the free generators of $\pi_1(D_n, p_*)$, and consider the mapping $\epsilon: \pi_1(D_n,p_*)\to \mathbb{Z}$ that takes a word in $x_1, \ldots x_n$ to the sum of its exponents. Denote by $\tilde{B_n}$ the covering space associated to the kernel of $\epsilon$. The group of covering transformations of $\tilde{B_n}$ is isomorphic to $\mathbb{Z}$, which we denote as a multiplicative group generated by $t$. The homology group $H_1(\tilde{B_n})$ is a free $\mathbb{Z}[t, t^{-1}]$-module of rank $n-1$.

Then, let $\varphi$ be an autohomeomorphism of $D_n$, i.e. an element of $B_n$. We can lift $\varphi$ to a map $\tilde{\varphi}: \tilde{B_n}\to\tilde{B_n}$ that fixes the fibre over $p_*$ pointwise, thus inducing a $\mathbb{Z}[t, t^{-1}]$-module automorphism $\tilde{\varphi}_*$ of $H_1(\tilde{B_n})$; the reduced Burau representation $\rho_n$ takes $\varphi$ to $\tilde{\varphi}_*$. 

\begin{figure}[htpb!]\centering
\begin{subfigure}[c]{.43\textwidth}\centering
    \begin{overpic}[scale=.4]{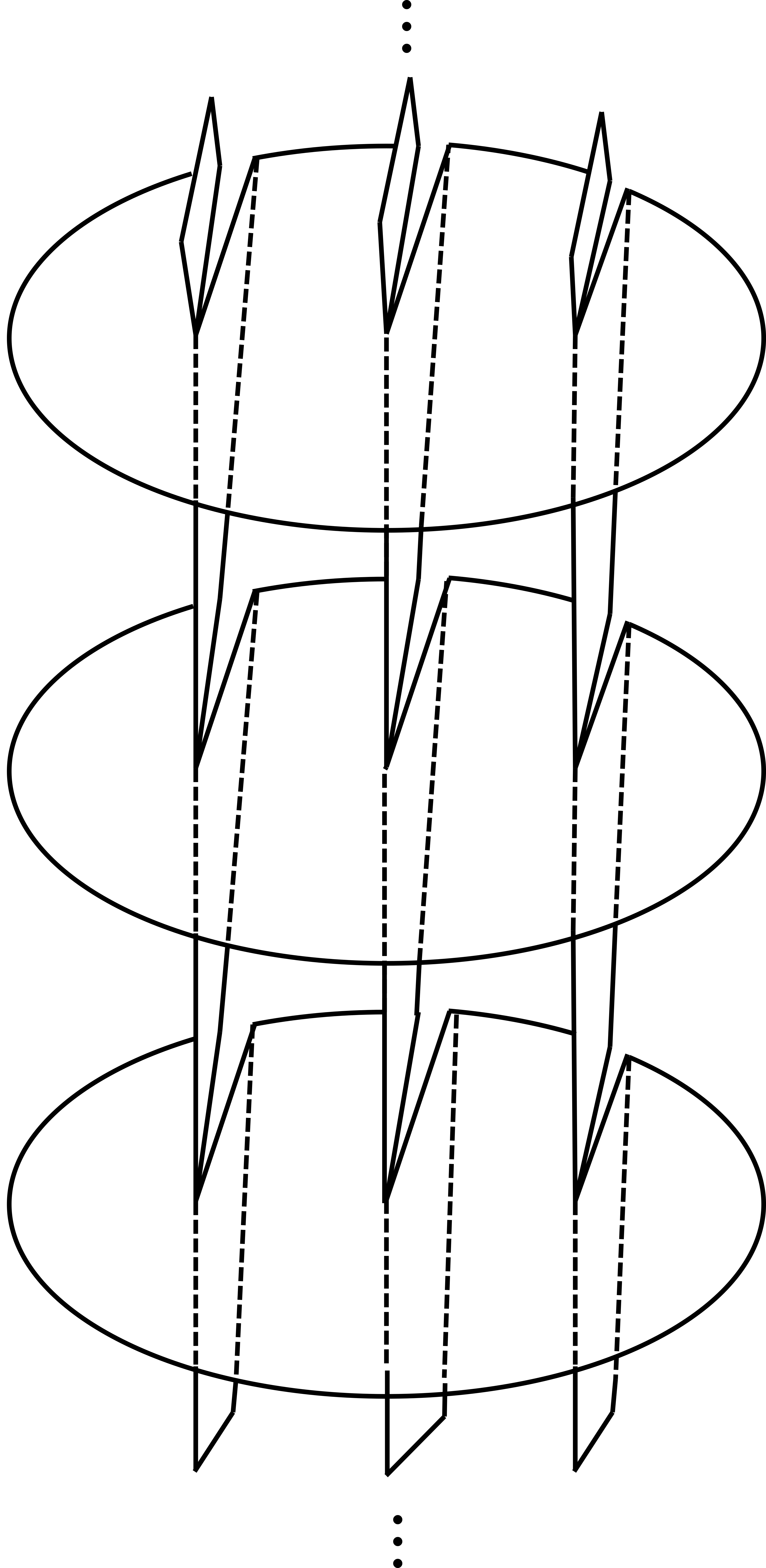}
    \end{overpic}
    \caption{The covering space $\tilde{B}_3$; the covering transformation $t$ shifts the entire space up one level.}
\end{subfigure}\hspace{5mm}
\begin{subfigure}[c]{.43\textwidth}\centering
    \begin{overpic}[scale=1]{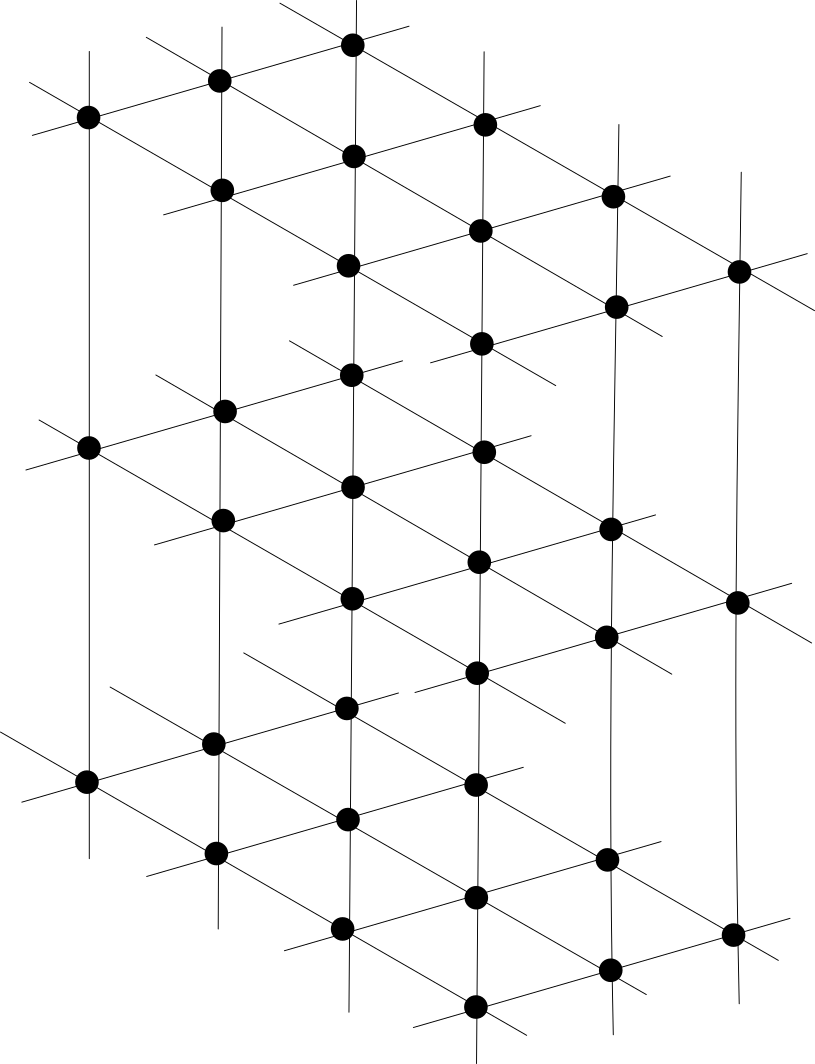}
    \end{overpic}
    \caption{A deformation retract of the covering space $\tilde{G}_3$; the covering transformation $t_i$ shifts the entire space one level in the $i$-th dimension.}
\end{subfigure}
\caption{The Burau and Gassner covering spaces}
\label{fig:covering-space}
\end{figure}

We may define the Gassner representation $\tau_n$ of the braid group analogously; we simply modify the definition of the Burau representation to take a word in $x_1, \ldots, x_n$ to an $n$-tuple containing the sum of the exponents of each $x_i$, so that $\epsilon$ takes $\pi_1(D_n, p_*)$ to $\mathbb{Z}^n$ rather than $\mathbb{Z}$. Then, letting $\tilde{G}_n$ denote the corresponding covering space, any braid $\phi\in PB_n$ lifts to a map $\tilde{\phi}: \tilde{G}_n\to \tilde{G}_n$, which induces a $\mathbb{Z}[t_1, t_1^{-1}, \ldots, t_n, t_n^{-1}]$ automorphism $\tilde{\phi}_*$ of $H_1(\tilde{G}_n)$. The Gassner representation then takes $\phi$ to $\tilde{\phi}_*$

\subsection{Burau and Gassner polynomials} Let $\alpha_0$ and $\beta_0$ be an arbitrary but fixed choice of disjoint simple arcs on $D_n$, where $\alpha_0$ runs from $p_1$ to $p_2$ and $\beta_0$ runs from $p_*$ to $p_3$.  Choose any element $\phi\in B_n$ that fixes the punctures and the base point, and set $\alpha = \alpha_0$ and $\beta = \phi(\beta_0)$.  Assume that $\beta$ has been modified by isotopy so that the arcs intersect minimally and transversely. 

Then, the {\it Burau polynomial}, well-defined up to multiplication by an arbitrary power of $t$, is given by: 
\begin{align*} 
\mathcal B_{\phi}(t)=\sum_{k\in\mathbb{Z}} (t^k\tilde{\alpha}, \tilde{\beta})t^k
\end{align*}
where $(t^k\tilde{\alpha}, \tilde{\beta})$ denotes the algebraic intersection number of $t^k\tilde{\alpha}$ and $\tilde{\beta}$ in $\tilde{B_n}$. This polynomial was first introduced by Moody in \cite{moody}. Similarly, the {\it Gassner polynomial} is given by 
\begin{align*} 
\mathcal G_{\phi}(t_1, \ldots, t_n)=\sum_{k_1, \ldots, k_n\in\mathbb{Z}} (t_1^{k_1}\ldots t_n^{k_n}\tilde{\alpha}, \tilde{\beta})t_1^{k_1}\ldots t_n^{k_n}
\end{align*}

The polynomials $\mathcal B_{\phi}(t)$ and $\mathcal G_{\phi}(t_1, \ldots, t_n)$ give necessary and sufficient conditions for the faithfulness of $\rho_n$ and $\tau_n$ for any value of $n$, and this result has been the main tool used to study the faithfulness of the Burau representation to date. The fact that the Gassner polynomial gives a necessary and sufficient condition for determining $\tau_n$'s faithfulness does not seem to have been used in the literature, although it was originally proved alongside the corresponding result regarding the Burau representation in \cite{moody}. Since we will be particularly interested in representations that factor through the Gassner representation, i.e. modulo $p$ reductions, we will state a slight generalisation of Moody's criteria regarding representations that factor through $\tau_n$ via ring homomorphisms from $\mathbb{Z}[t_1^{\pm1},\ldots,t_n^{\pm 1}]$. 

\begin{theorem}\label{modp-thm}
    If $\pi: \mathbb{Z}[t_1^{\pm 1}, \ldots, t_n^{\pm 1}]\to R$ is a ring homomorphism, $(t_1\ldots t_n)^k \not \in \ker(\pi)$ for all $k\neq 0$, and $\pi': \text{GL}_n(\mathbb{Z}[t_1^{\pm 1}, \ldots, t_n^{\pm 1}])\to \text{GL}_n(R)$ is the induced homomorphism of general linear groups, then the following are equivalent:
    \begin{enumerate}
        \item The composition $\tau_n\circ \pi': PB_n\to \text{GL}_{n-1}(R)$ is faithful
        \item The image of the Gassner polynomial $\pi(\mathcal{G}_{\phi}(t_1, \ldots, t_n))$ is non-zero for all $\phi\in B_n$ such that the geometric intersection number $\hat{\iota}(\alpha_0, \phi(\beta_0))>0$. 
    \end{enumerate}
\end{theorem}
\begin{proof}
    This follows almost exactly as in \cite{bigelow} and \cite{moody}; the only distinction comes from the fact that since we are working with $\tau_n\circ \pi'$ rather than $\tau_n$ itself, we need an extra criteria to rule out the possibility that powers of the Garside braid $\Delta^2$ (the only non-trivial pure braids that give non-intersecting $\alpha$ and $\beta$) are not in $\ker(\tau_n\circ \pi')$. However, noting that $\Delta$ acts on $H_1(\tilde{G}_n)$ as multiplication by $t_1\ldots t_n$, our hypothesis that $(t_1\ldots t_n)^k \not \in \ker(\pi)$ gives us that no powers $\Delta^{2k}$ are in the kernel of $\tau_n\circ \pi'$, and we can then follow Bigelow and Moody exactly. 
\end{proof}

We point out that by setting $R=\mathbb{Z}[t,t^{-1}]$ and $\pi: \mathbb{Z}[t_1^{\pm 1}, \ldots, t_n^{\pm 1}]\to R$ to be the map sending each $t_i\to t$, we recover Moody, Long-Paton, and Bigelow's faithfulness condition for the Burau representation.

Our strategy, now, will be to impose restrictions on $\mathcal{G}_{\phi}(t_1, \ldots, t_n)$, so that we can obtain faithful representations of the braid groups over $\mathbb{Z}_p[t_1^{\pm 1},\ldots t_n^{\pm 1}]$ via applications of Theorem \ref{modp-thm}. Before we do that, however, we will give a planar interpretation of the polynomials that will be fundamental to our arguments; this will allow us to work on the disc $D_n$ itself rather than the covering space $\tilde{G}_n$, making for easier topological arguments. 



\subsection{Planar interpretation of polynomials} We will begin by giving a planar interpretation of the Burau polynomial $\mathcal{B}_{\phi}$, which will then easily generalise to the Gassner polynomial $\mathcal{G}_{\phi}(t_1, \ldots, t_n))$. These observations will then allow us to study $\rho_n\otimes \mathbb{Z}_p$ and $\tau_n\otimes \mathbb{Z}_p$ on the disc itself. 

Notice, then, that each point $q_i$ at which $\beta$ crosses $\alpha$ contributes a monomial $\pm t^{k_i}$ to the Burau polynomial $\mathcal B_{\phi}(t)$. The exponent $k_i$ is such that $\tilde{\beta}$ and $t^{k_i} \tilde{\alpha}$ cross at a lift of $q_i$, and the sign of the monomial is the sign of that crossing. We choose our lifts and sign conventions such that the first point at which $\beta$ crosses $\alpha$ is assigned the monomial $+t^0$.

In Figure \ref{poly-calc}, the sign of the monomial at a crossing $p_i$ will be positive if $\beta$ is directed downwards at $p_i$ and negative if $\beta$ is directed upwards at $p_i$. The exponents of the monomials can be computed as follows:

Then, let $q_i, q_{i+1}\in \alpha\cap\beta$ and let $k_i$ and $k_{i+1}$ be the exponents of the monomials at $q_i$ and $q_{i+1}$. Let $\alpha_{\mu_i}$ and $\beta_i$ be the arcs from $q_i$ to $q_{i+1}$ along $\alpha$ and $\beta$ respectively and suppose that $\alpha_{\mu_i}\cap\beta_i=\{q_i, q_{i+1}\}$. Let $k$ be such that $\alpha_{\mu_i}\cup\beta_i$ bounds a $k$-punctured disc. Then $|k_{i+1}-k_i|=k$. If $\beta_i$ is directed counterclockwise around the $k$-punctured disc, then $k_{i+1}\geq k_i$, otherwise $k_i\geq k_{i+1}$. 

We can thus progress along $\beta$, calculating the exponent at each crossing based on the exponent at the previous crossing, as is shown below. We will label the intersections $q_1, q_2, \ldots, q_m$ of $\alpha$ and $\beta$ according to the order in which they appear on $\beta$.

\begin{figure}[htpb!]\centering
\begin{overpic}[scale=.9]{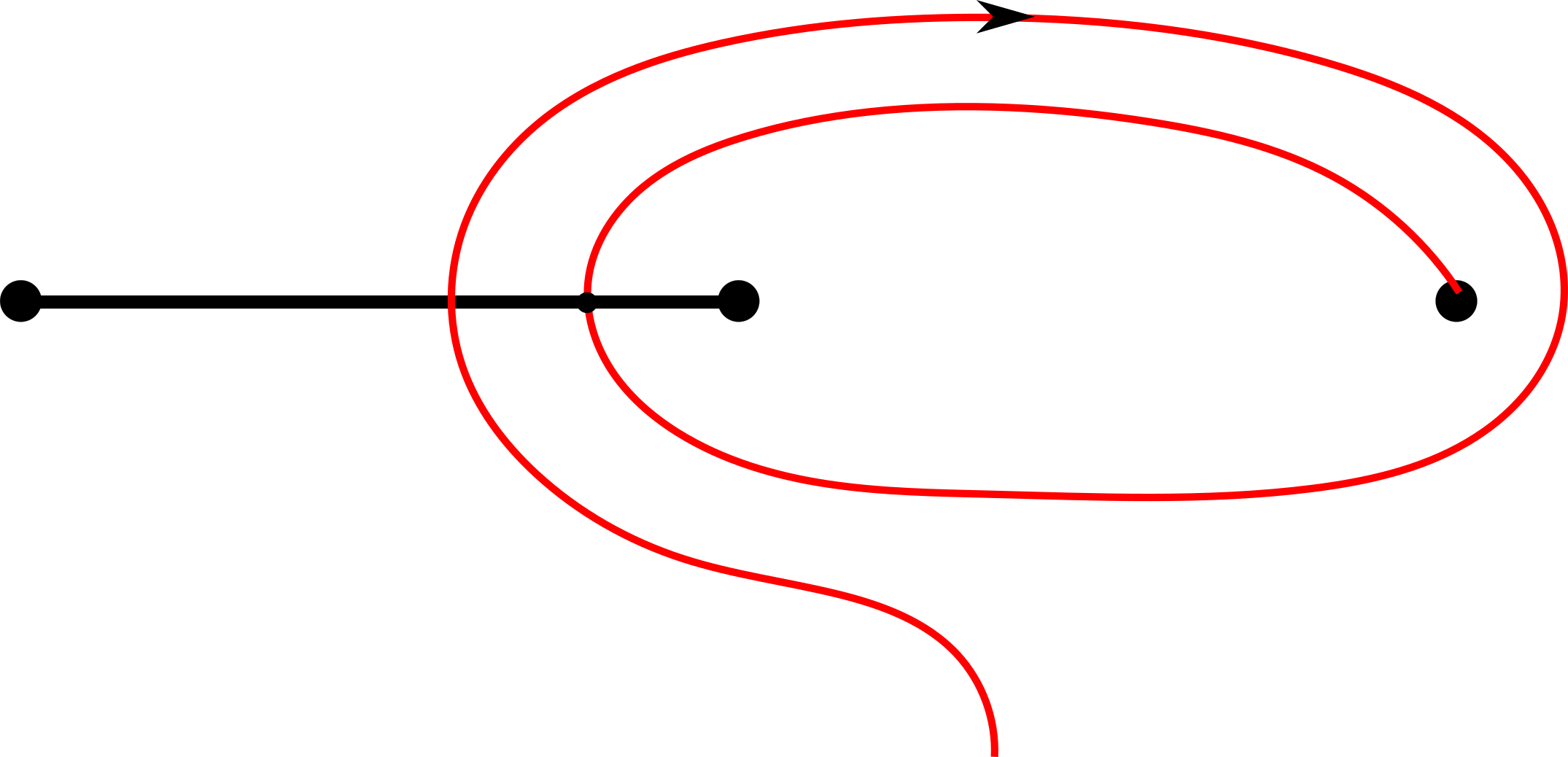}
    \put (15,24) {$\alpha$}
    \put (68, 9) {$\beta$}
   \put (23,31) {\tiny $t^0$}
   \put (32,31) {\tiny $t^2$}
   \put (0,25) {\tiny $p_1$}
  \put (45,25) {\tiny $p_2$}
   \put (89,25) {\tiny $p_3$}
\end{overpic}
\caption{Arcs $\alpha$ and $\beta$ corresponding to a Burau polynomial of $1+t^2$}
\label{poly-calc}
\end{figure}

The Gassner polynomial can be similarly calculated, where the difference in the exponents of a term $t_j$ in the monomials at $q_i$ and $q_{i+1}$ is either 0 or $\pm 1$ depending on whether or not $p_j\in \alpha_{\mu_i}\cup\beta_i$. In the example in Figure \ref{poly-calc}, the Gassner polynomial is thus given by $1+t_2 t_3$. 

\section{Faithful representations of $B_n$}

We are now ready to investigate the form of the Gassner polynomial. Our general idea will be to use the planar interpretation of the polynomial we gave in the previous section to express the Gassner polynomial as a sum of monomials at $\alpha$ and $\beta$'s crossings, i.e.
\begin{align*}
    \mathcal{G}_\phi(t_1, \ldots, t_n)=\sum_{i=0}^m a_i t_1^{k_{1, i}}\ldots t_n^{k_{n, i}}, \text{ where each }a_i\in\{-1, 1\}.
\end{align*}

According to Theorem \ref{modp-thm}, to study the faithfulness of $\tau_n$ it suffices to understand the forms of the distinct values $i, j$ at which each $k_{x, i}=k_{x, j}$, in which case we will say that $(i, j)$ is a {\it matching pair}. 

\begin{figure}[htpb!]
\begin{subfigure}[c]{.43\textwidth}
\begin{overpic}[scale=.8]{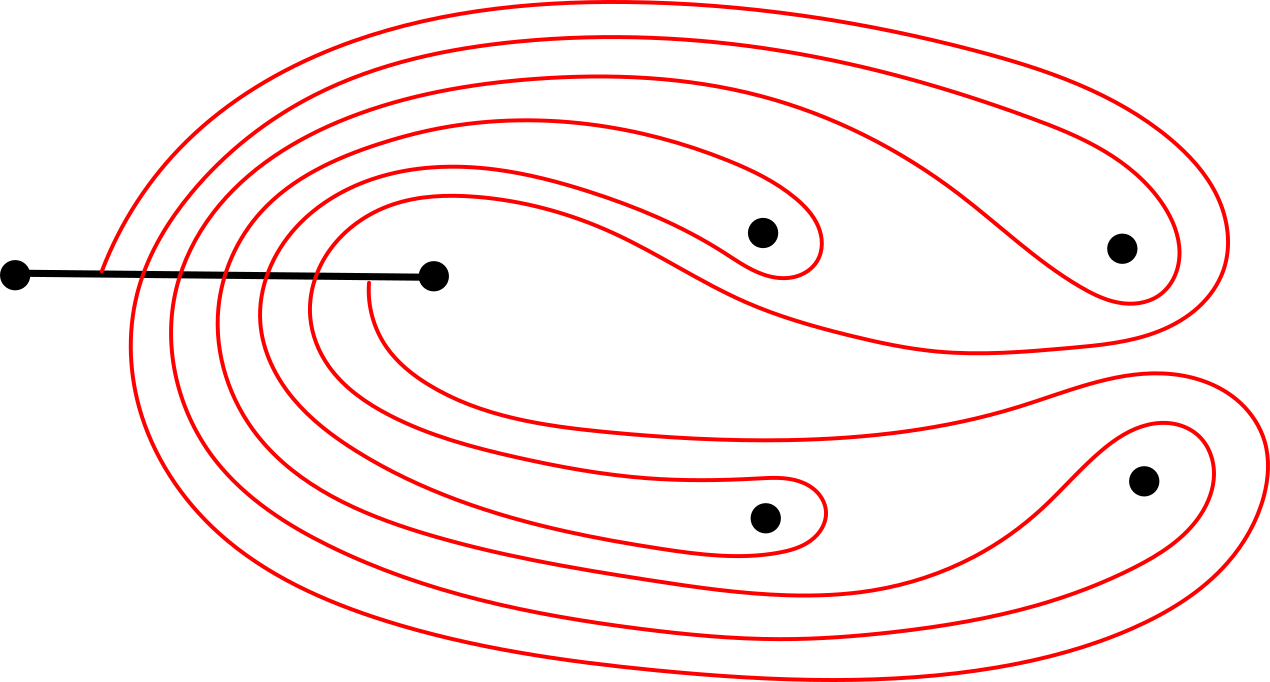}
\end{overpic}
\caption{A matching pair $(1, 7)$ where $\beta$ has the same sign at $q_1$ and $q_7$.}
\end{subfigure}
\begin{subfigure}[c]{.43\textwidth}
\begin{overpic}[scale=.8]{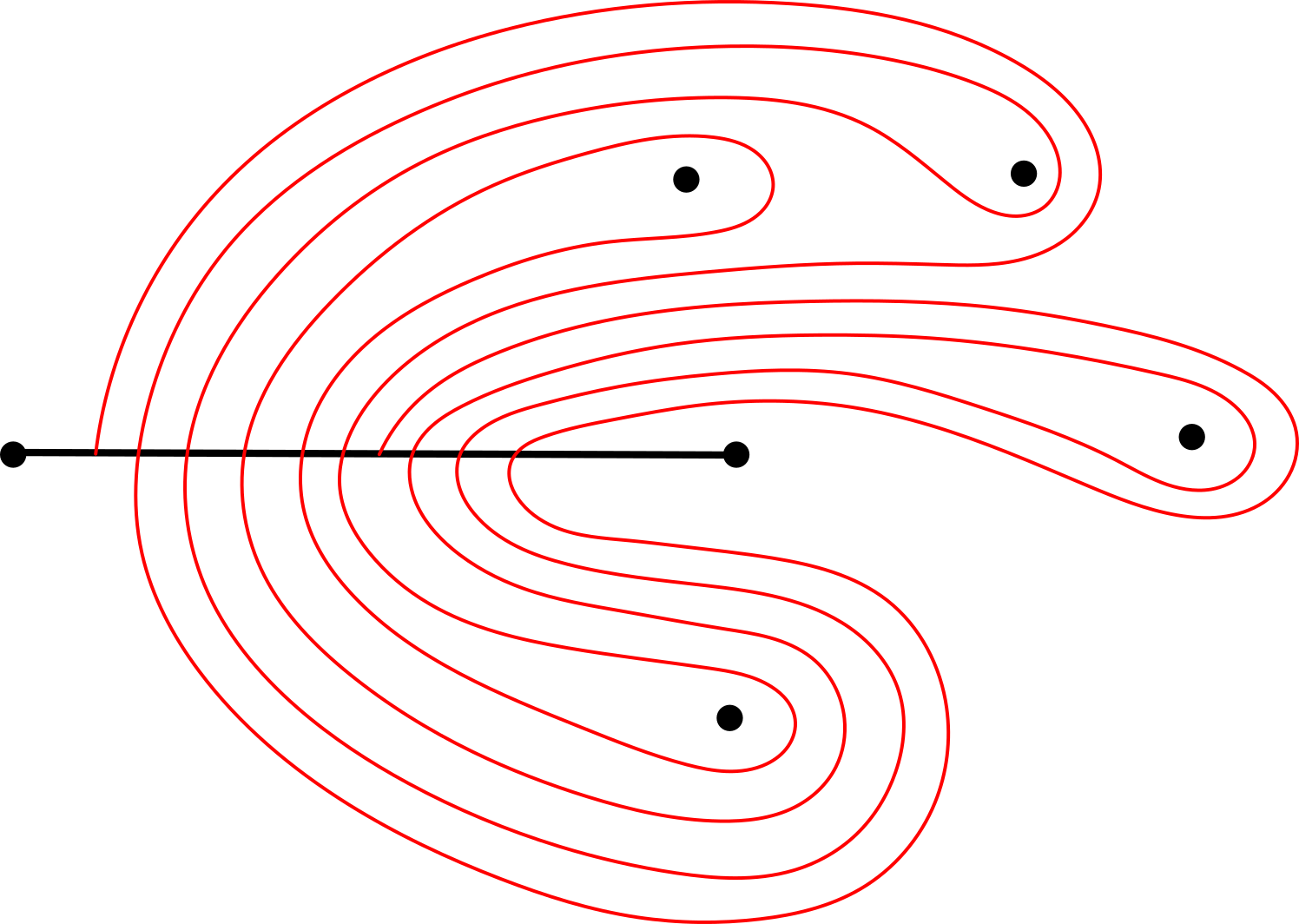}
\end{overpic}
\caption{A matching pair $(1, 10)$ where $\beta$ has different signs at $q_1$ and $q_{10}$.}
\end{subfigure}
\caption{Examples of matching pairs.}
\label{fig:matching-pairs}
\end{figure}

The following Lemma is then crucial.

\begin{lemma}[Key Lemma]
If $\alpha$ and $\beta$ have $m>0$ crossings, then, there is some $1\leq t\leq m$ such that there is no $1<l\leq m$ so that $(t, l)$ is also a matching pair. 
\end{lemma}
\begin{proof}
    Define an ordering on $\alpha$ and $\beta$'s crossings $q_t$ wherein $q_v<q_{v'}$ if $q_v$ occurs before $q_{v'}$ along $\alpha$, and assume $\alpha$ is oriented so that $q_1<q_m$. Then, let $t_0=1, t_{r}=m$, and let $t_1, \ldots, t_r$ be such that $t_0 < t_1 < \ldots < t_r$ and $q_{t_0} < q_{t_1} <\ldots < q_{t_r}$. Assume we have chosen the $t_v$'s so that for each $v$ there is no $w>t_v$ so that $q_{t_v}<q_w<q_{t_{v+1}}$. Then, let $t_0^*=1, t_{r'}^*=m$, and let $t_1^*,\dots,t_{r'}^*$ be defined similarly, so that $t_0^*<\ldots<t_{r'}^*$ and $q_{t_0^*} <\ldots < q_{t_{r'}^*}$, except that we instead choose the $t_v^*$'s so that for each $v$ there is no $w<t_{v+1}$ so that $q_{t_{v-1}^*}<q_w<q_{t_{v}^*}$. 
    
    Then, taking $\alpha(a,b)$ to denote the portion of $\alpha$ that runs from $q_a$ to $q_b$, we can for each $v$ define a {\it simple} closed curve 
    \begin{align*}
        \gamma(t_v, t_{v+1}) := (\beta_{t_v}\cup\ldots\cup\beta_{t_{v+1}-1})\cup\alpha(t_v,t_{v+1})
    \end{align*}
    Since $\alpha$ and $\beta$ are in minimal position, each $\gamma(t_v, t_{v+1})$ must contain a non-empty set of punctures $\mathcal{P}(\gamma(t_v,t_{v+1}))$. Each $\gamma(t_v,t_{v+1})$ also may contain discs $\alpha_w\cup\beta_w, t_v\leq w\leq t_{v+1}$ that bound certain punctures that are not contained in the interior of $\gamma(t_v,t_{v+1})$. These punctures will be of particular interest to us. With that in mind, for any $\gamma(t_v,t_{v+1})$, let 
    \begin{align*}
        \mathcal{C}(\gamma(t_v,t_{v+1})):=\bigcup_{t_v\leq w<t_{v+1}}\mathcal{P}(\alpha_w\cup\beta_w)
    \end{align*}

    We may repeat this construction to find curves $\gamma(t_v^*,t_{v+1}^*)$ and sets $\mathcal{C}(\gamma(t_v^*,t_{v+1}^*))$. We will show that we can reduce to the case where there is some puncture $p_x$ that is in exactly one $\mathcal{C}(\gamma(t_v,t_{v+1}))$ or exactly one $\mathcal{C}(\gamma(t_v^*,t_{v+1}^*))$. 

    To begin to see this, consider any puncture $p_y \in\mathcal{P}(\gamma(t_{v'-1},t_{v'}))$ that lies in the interior of the simple closed curve $\gamma(t_{v'-1},t_{v'})$. Then, there is some $\alpha_z\cup\beta_z$, $t_{v'-1}\leq z<t_{v'}$ that bounds $p_y$. If there is some $v \neq v'$ so that $p_y\in\mathcal{C}(\gamma(t_v,t_{v+1}))$, then we have some $t_v\leq w<t_{v+1}$ so that $p_y\in\mathcal{P}(\alpha_w\cup\beta_w)$. It then follows that the interior of one of $\alpha_{z-1}\cup\beta_{z-1}$ and $\alpha_w\cup\beta_w$ is contained in the other. 

    Then, if we assume that we have chosen $w$ so that any other $t_v\leq w'<t_{v+1}$ with $p_y\in\mathcal{P}(\alpha_{w'}\cup\beta_{w'})$ has $\alpha_{w'}\cup\beta_{w'}$ contained in $\alpha_{w}\cup\beta_{w}$, and similarly for the choice of $z$, we may use the nesting order to define a partial order on the indices $1,\dots,r$ wherein we say that $v' >_N v$ if $\alpha_w\cup\beta_w$ is contained in $\alpha_z\cup\beta_z$ and $v' <_N v$ if $\alpha_z\cup\beta_z$ is contained in $\alpha_w\cup\beta_w$. 

    Since $r$ is finite, there is some maximal $v$, i.e. some $v$ so that there is no $v'$ with $v'>_N v$. We now consider two cases, first wherein $p_x\in\mathcal{P}(\gamma(t_v,t_{v+1}))$ that is bound by exactly one $\alpha_z\cup\beta_z$ with $t_v\leq z<t_{v+1}$, and second where there is more than one such $z$. If the former, then every $\alpha_w\cup\beta_w$ that bounds $p_x$ is contained in the interior of $\alpha_{z}\cup\beta_{z}$. Therefore, in each such case $q_{w}$ and $q_{w+1}$ both lie in the interior of $\gamma(t_v,t_{v+1})$, and so there is some $w'\leq w$ so that $q_{t_v}<q_{w'}<q_{t_{v+1}}$. Then, let $u$ be such that $t_u^*\leq z < t_{u+1}^*$. If $w'<t_u^*$, there would then similarly be some $w''\leq w'$ so that $q_{t_u^*}<q_{w''}<q_{t_{u+1}^*}$, which would contradict the definition of the $t_i^*$. Thus, we must have $t_u^*\leq w' \leq w \leq t_{u+1}^*$. i.e. so that $p_x$ is in exactly one $\mathcal{C}(\gamma(t_v^*,t_{v+1}^*))$, as desired. 

    Now, consider the case where our maximal index $v$ is such that there are $t>1$ indices $t_v\leq z<t_{v+1}$ with $\alpha_z\cup\beta_z$ bounding $p_x\in\mathcal{P}(\gamma(t_v,t_{v+1}))$. Then, we can repeat the construction above using the outermost disc $\alpha_z\cup\beta_z$ bounding $p_x$ unless there is some curve $\gamma(t_{v'},t_{v'+1})$ with $p_x\in\mathcal{C}(\gamma(t_{v'},t_{v'+1}))$ but with no crossings in the interior of $\gamma(t_v,t_{v+1})$. In this case, however, $\gamma(t_v,t_{v+1})$ encircles $p_x$ more times than any other $\gamma(t_{v'},t_{v'+1})$, i.e. so that there is some $w$ so that the set $L_x=\{w' : k_{x,w'}=k_{x,w}\}$ is contained in the interval $(t_v,t_{v+1})$. However, notice then that any $t',t''\in L_x$ have that $k_{t',y}\neq k_{t'',y}$ for $p_y\in\mathcal{P}(\gamma(t_v,t_{v+1}))$, and thus each $t'\in L_x$ cannot be part of a matching pair.

    Then, without loss of generality, assume that we have some $p_x$ that is in exactly one $\mathcal{C}(\gamma(t_v,t_{v+1}))$. We claim we can use this curve to find our $1\leq t\leq m$ that has no matching pair $(t,l)$. We will first show this in the case that $p_x\not\in\mathcal{P}(\gamma(t_v,t_{v+1}))$, and then in the case that $p_x\in\mathcal{P}(\gamma(t_v,t_{v+1}))$. 

    If $p_x\not\in\mathcal{P}(\gamma(t_v,t_{v+1}))$, then we have that $k_{t_{v+1},x}-k_{t_v,x}=0$, since a simple closed curve has winding number 0 around punctures not in its interior. Then, it follows that the set $Y_x = \{t' : k_{t',x}\neq k_{t_v,x}\}$ is non-empty and contained in the interval $(t_v,t_{v+1})$. Therefore, if there is any matching pair $(t',l)$ with $t'\in Y_x$, we must also have $l\in Y_x$. However, as before notice then that if we have $k_{t',x}=k_{t'',x}$ for some $t',t''\in Y_x$, we have that $k_{t',y}\neq k_{t'',y}$ for $p_y\in\mathcal{P}(\gamma(t_v,t_{v+1}))$, and thus each $t'\in Y_x$ cannot be part of a matching pair. 

    Furthermore, if $p_x\in\mathcal{P}(\gamma(t_v,t_{v+1}))$, then $k_{t_{v+1},x}-k_{t_v,x}=\pm 1$, since a simple closed curve has winding number $\pm 1$ around punctures in its interior (with the sign determined by the orientation of the curve). Then, it follows that $k_{t_v,x}=k_{w,x}$ if $w<t_v$ and $k_{t_{v+1},x}=k_{w',x}$ if $w'>t_{v+1}$, since $p_x$ is not bound by $\alpha_w\cup\beta_w$ if $w$ does not lie between $t_v$ and $t_{v+1}$. If there is any $t_v<t'<t_{v+1}$ with $k_{t',x}\neq k_{t_v,x}$ and $k_{t',x}\neq k_{t_{v+1},x}$, then we can apply the same argument as last time and find a set $Y'_x=\{t' : k_{t',x}\neq k_{t_v,x}, k_{t',x}\neq k_{t_{v+1},x}\}$ of indices without matching pairs. If this set is empty, however, then there is exactly one index $1\leq j<m$, with $p_x\in \alpha_j\cup\beta_j$, and so $k_{i,x}\neq k_{i',x}$ for any $i<j$, $i'\geq j$. Then, if every index were to have a matching pair, every $i<j$ would have to be part of some matching pair $(i,s)$ where $s<j$. Thus, we can inductively reduce to looking at the subarc of $\beta$ that runs from $q_1$ to $q_{j-1}$, and so we are done. 
\end{proof}

\begin{customthm}{A}
    The Gassner representation $\tau_n$ is faithful. 
\end{customthm}
\begin{proof}

    By the Key Lemma, there is some $1\leq i \leq m$ with no matching pair, i.e. some term $t_1^{k_1}\ldots t_n^{k_n}$ in the Gassner polynomial that has coefficient $\pm 1$. This means that the Gassner polynomial is non-zero, and so we are done. 
\end{proof}



We can further establish simpler faithful representations of $B_n$ as a Corollary of this form, as below:




\begin{customthm}{B}
    The Gassner representation $\tau_n$ is faithful modulo $p$ for any $p>1$.
\end{customthm}
\begin{proof}
    The term in the Gassner polynomial with coefficient $\pm 1$ that we found in the proof of Theorem A is non-trivial modulo $p$ for any $p>1$, and so by Theorem \ref{modp-thm} we have the desired result. 
\end{proof}

\section{Faithfulness of $\rho_3\otimes \mathbb{Z}_p$}

Up to this point, our focus has been entirely on the Gassner representation $\tau_n$ and its modulo $p$ reductions. In this section, we will attempt to apply our methods to investigate the modulo $p$ Burau representation, and apply similar methods to prove that $\rho_3\otimes \mathbb{Z}_p$ is faithful for all $p$. While this result was originally proved in \cite{Lee-Song}, we hope that our methods here will prove to be useful in future study of the kernel of $\rho_n\otimes \mathbb{Z}_p$ for higher $n$. 

Our main tool in achieving our goal will be a sequence that we will call the {\it crossing-puncture sequence}, motivated by the planar interpretation of the Burau polynomial given in $\S 2$. The procedure described in this section uses the fact that the Burau polynomial is, in some sense, ``defined'' by the sequence of integers $|k_{i+1}-k_i|$ (where $k_{i+1}$ and $k_i$ denote the exponents of the monomials at the $(i+1)$-th and $i$-th crossings respectively), an intuition which we will now attempt to make precise. 

{\bf Crossing-puncture sequence.} As in $\S 2$, let $\alpha=\alpha_0$, $\beta=\phi(\beta_0)$. We may assume that $\alpha$ is an arc that joins the puncture $p_1$ directly to the puncture $p_2$. Suppose, then, that $\alpha$ and $\beta$ intersect $m$ times, at points $q_1, q_2, \ldots, q_m$. For each $i$, let $\alpha_{\mu_i}$ and $\beta_i$ be the subarcs of $\alpha$ and $\beta$ that go from $q_i$ to $q_{i+1}$, as in $\S 2$. 

Then, let $k$ be such that $\alpha_{\mu_i} \cup \beta_i$ bounds a $k$-punctured disc. The {\it crossing-puncture number} $C_i$ is equal to $k$ if $\beta$ is oriented clockwise around $\alpha \cup \beta$, and is equal to $-k$ otherwise. The {\it crossing-puncture sequence} of $\phi$ is the sequence $C_1, C_2, \ldots, C_m$.


Now, recall that we can express the Burau polynomial of $\phi$ as the sum of the monomials at $\alpha$ and $\beta$'s $m$ crossings, as follows:
\begin{align*}
    \mathcal{B}_\phi(t)=\sum_{i=0}^m a_i t^{k_i}, \text{ where each }a_i\in\{-1, 1\}.
\end{align*}
As discussed in $\S 2$, the exponents $k_i$ and $k_j$ are equal if and only if $C_i+C_{i+1}+\ldots+C_j=0$.

\begin{theorem}\label{th:faithful mod p}
The representation $\rho_3\otimes \mathbb{Z}_p$ is faithful for all $p>1$. 
\end{theorem}
\begin{proof}
Keeping the CP sequences in mind, our strategy will be to show that if $\phi\in B_3$ is such that $\hat{\iota}(\alpha_0, \phi(\beta_0))>0$, then the Burau polynomial $\mathcal{B}_\phi(t)$ necessarily has some term $t^{k_i}$ with coefficient $\pm 1$; such a term cannot vanish modulo $p$ for any integer $p>1$, and so this will complete the proof. 

Thus, let $M$ be such that $k_M$ be $\max(\{k_i : 0\leq i\leq m\})$; we will show that that there is no $r\neq M$ so that $k_r=k_M$. To do this, we will first need the following Lemma regarding the general form of the CP sequences of 3-braids. 

\begin{lemma}\label{turning-lemma}
If $\phi\in B_3$ and the CP sequence $C_1, C_2, \ldots, C_n$ of $\phi$ does not have $C_i=C_{i+1}$ for all $i$, it is of the form
\begin{align*}
    C_{1, 1}, C_{1, 2}, &\ldots, C_{1, i_1}, \\
    C_{2, 1}, C_{2, 2}, &\ldots, C_{2, i_2}, \\
    &\ldots\\
    C_{m, 1}, C_{m, 2}, &\ldots, C_{m, i_m}
\end{align*}
where for each $r<m$, we have $i_r\in\{2, 3\}$, $|C_{r,i_r}|=3$, and exactly one $|C_{r, j_r}|=1$. For $r=m$, we have $i_r\in\{2,3,4\}$, $|C_{r, i_r}|=3$ or $|C_{r,i_r-1}|=3$ and  exactly one $|C_{r, j_r}|=1$.
\end{lemma}
\begin{proof}
To begin, notice that if $\beta$'s first crossing has sign $+$, every disc corresponding to a $|C_i|=1$ has $q_i$ with positive sign and $q_{i+1}$ with negative sign. Furthermore, every disc corresponding to a $|C_i|=3$ has $q_i$ with negative sign, and $q_{i+1}$ with positive sign, and any disc corresponding to a $|C_i|=2$ has $q_i$ and $q_{i+1}$ with the same sign.

\begin{figure}[htpb!]\centering\label{case1}
\begin{subfigure}[c]{.43\textwidth}
    \begin{overpic}{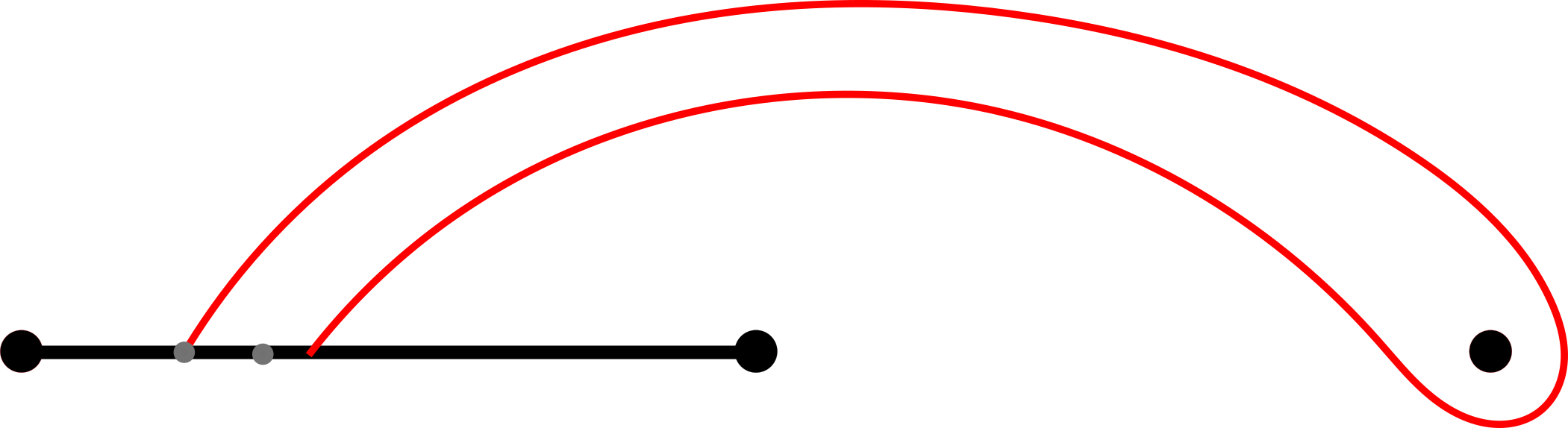}
        \put (0,9) {\tiny $p_1$}
        \put (46,9) {\tiny $p_2$}
        \put (90,9) {\tiny $p_3$}
        \put (9,1) {\tiny $q_i$}
        \put (17,1) {\tiny $q_{i+1}$}
    \end{overpic}
    \caption{The crossing $q_i$ has positive sign and $q_{i+1}$ has negative sign}%
\end{subfigure}\hspace{5mm}\begin{subfigure}[c]{.43\textwidth}
    \begin{overpic}{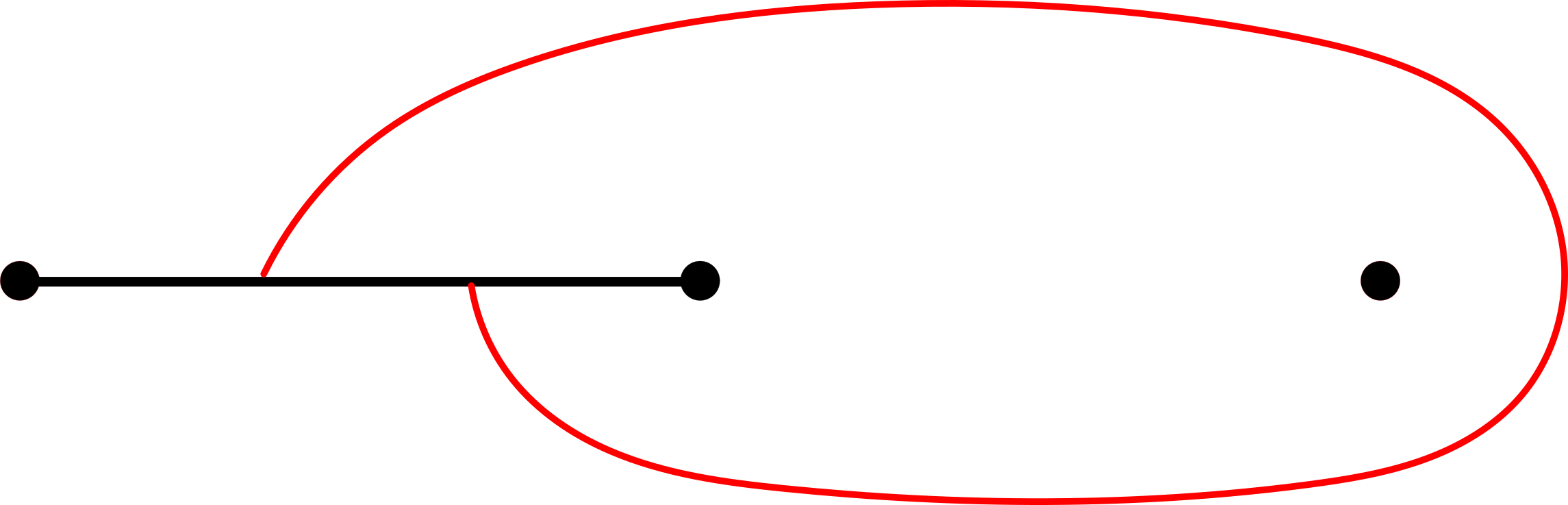}
        \put (-1,10) {\tiny $p_1$}
        \put (43,10) {\tiny $p_2$}
        \put (86,10) {\tiny $p_3$}
        \put (15,11) {\tiny $q_i$}
        \put (26,17) {\tiny $q_{i+1}$}
    \end{overpic}
    \caption{The crossings $q_i$ and $q_{i+1}$ have the same sign}%
\end{subfigure}
\caption{Discs corresponding to CP numbers of 1 and 2}
\end{figure}

It follows from this observation that if the CP sequence does not solely consist of $|C_{i}|=2$, i.e. if we do not have $C_i=C_{i+1}$ for all $i$, in order for the parity of the crossings to match up, the sequence must be a concatenation of $m$ blocks of the form $C_{r,1}\ldots,C_{r,i_r}$, where for each $r<m$, $|C_{r,i_r}|=3$ and exactly one $|C_{r,j_r}|=1$. The only claim remaining then is that each $i_r\in\{2,3\}$. To see that this is the case, notice that if $i_r>3$, there would need to be $|C_{r, t_1}|=|C_{r, t_2}|=2$ with $t_1 < j_r$ and $j_r < t_2 < i_r$. However, as we can see in the following figure, this would make it impossible for $\beta$ to extend to its endpoints at $p_*$ and $p_3$. The point here is that since $\beta$ is simple, the arc strands must stay in the same positions relative to each other. 

\begin{figure}[htpb!]
    \begin{overpic}[scale=0.9]{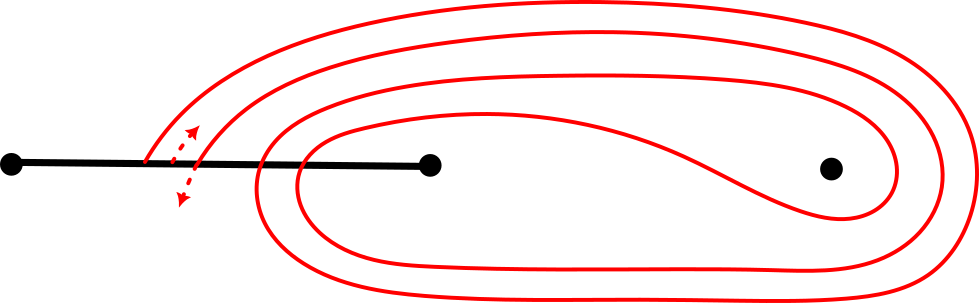}
    \end{overpic}
\caption{$\beta$ cannot reach $p_3$ without intersecting itself}
\end{figure}

This completes the proof of Lemma \ref{turning-lemma}. \end{proof}

We are now ready to complete the proof of Theorem \ref{th:faithful mod p}. To begin, we point out that it is easy to see, via a nesting argument, that if $i_{r_1}=i_{r_2}=3$, then we have $C_{r_1, 1}=C_{r_2,1}$, $C_{r_1, 2}=C_{r_2,2}$, and $C_{r_1, 3}=C_{r_2,3}$ (i.e. every block of length 3 is the same). Then, we may assume without loss of generality that the sum of this block in the CP sequence is positive: if not, we may repeat the same argument replacing $k_M$ with $\text{min}(\{k_i : 0\leq i\leq m\})$. 

Then, we can easily compute that the possible sequences taken by this block of length 3 are as follows:
\begin{itemize}
    \item 2, -1, 3
    \item -2, 1, 3
    \item 1, -2, 3
\end{itemize}

Furthermore, by Lemma \ref{turning-lemma}, every block of length 2 is of the form $\pm 1, \pm 3$. Thus, by our assumption of $k_M$'s maximality, if there is some $y \neq M$ with $k_y=k_M$, the monomial $t^{k_y}$ must a priori appear after some $C_{t, i_t}$ (i.e. at the end of a block) or after some $C_{t, 1}=1$ with $i_t=2$ and $C_{t, 2}=-3$. However, we can go even further so as to rule out the latter case: we must then have that $|C_{t+1, 1}|=2$, meaning that the next block has a positive sum, contradicting maximality. 

Then, both maximal exponents much appear at the end of some block. By maximality, the block after both $t^{k_y}$ and $t^{k_M}$ monomials must have a negative sum; thus, in order to avoid having a higher exponent appear, the only possibility is that the next block is $-1, -3$. However, as we can see in the following figure, we cannot have two $-1, -3$ blocks in the CP sequence, as this makes it impossible for $\beta$ to reach its endpoints. 

\begin{figure}[htpb!]
    \begin{overpic}{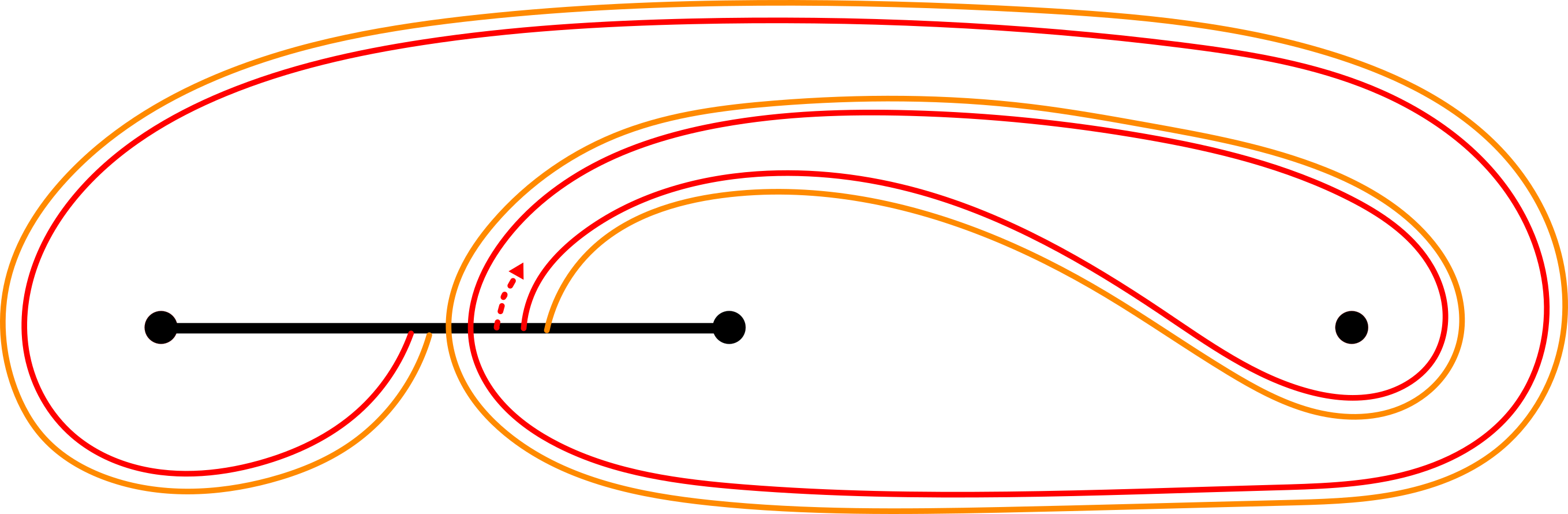}
    \end{overpic}
\caption{These cannot both be subarcs of an arc that begins at $p_*$ and ends at $p_3$}
\end{figure}

Thus, the Burau polynomial $\mathcal{B}_\phi(t)$ necessarily has some term $t^{k_M}$ with coefficient $\pm 1$, as desired. This completes the proof of Theorem \ref{th:faithful mod p}. \end{proof}




\section{A note on pseudo-Anosov mapping classes}

The foundational Thurston-Nielsen classification of diffeomorphisms of surface \cite{Thu88} allows us to classify mapping classes of surfaces into three categories: periodic, reducible, and pseudo-Anosov elements. In the context of braid groups, a periodic mapping is a braid that has some power which lies in the centre $Z(B_n)$, and a reducible mapping is a braid that fixes a family of disjoint essential curves.


It is known via \cite{Lee-Song} that $\ker(\rho_4 \otimes \mathbb{Z}_p)$ is entirely pseudo-Anosov for all $p$, and so we point out that one might perhaps attempt to understand $\ker(\rho_4\otimes \mathbb{Z}_p)$ via some sort of invariant of the curve complex that is encoded in $\rho_4\otimes \mathbb{Z}_p$. While we do not suggest here what such an invariant might look like in the case of $n=4$, we briefly show how such an idea can be used to show that $\rho_3$ acts faithfully on pseudo-Anosov 3-braids. 

\begin{theorem}
Given $\phi\in B_3$, the matrix $\rho_3(\phi)$ encodes the action of $\phi$ on an essential closed curve $\gamma'$.
\end{theorem}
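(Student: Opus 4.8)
The plan is to recognise the essential simple closed curves of $D_3$ as a set on which $\rho_3$ acts through a transparent linear action, and then to read the isotopy class of $\phi(\gamma')$ directly off the matrix. It is enough to treat a fixed reference curve, and the natural choice of $\gamma'$ is the boundary of a regular neighbourhood of $\alpha_0$, i.e.\ the curve encircling $p_1$ and $p_2$ that already occurs in \S2. First I would cap $\partial D$ with a once-punctured disc to obtain the four-punctured sphere $S_{0,4}$; this sends $B_3=\text{Mod}(D_3)$ onto a subgroup of $\text{Mod}(S_{0,4})$ with kernel the cyclic centre $\langle\Delta\rangle$, and carries the essential curves of $D_3$ bijectively onto the essential curves of $S_{0,4}$. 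Next I would pass to the hyperelliptic double cover $T^2\to S_{0,4}$ branched over the four punctures: every essential curve of $S_{0,4}$ lifts to an isotopy class of essential simple closed curve on $T^2$, that is, to a slope in $\mathbb{Q}\cup\{\infty\}$, in a $B_3$-equivariant way, and the induced action of $B_3$ on $H_1(T^2)\cong\mathbb{Z}^2$ is precisely the specialisation $\rho_3|_{t=-1}\colon B_3\to\mathrm{SL}_2(\mathbb{Z})$. Chaining these identifications, $\rho_3(\phi)|_{t=-1}$ --- and hence, a fortiori, the full matrix $\rho_3(\phi)$ --- determines the permutation of slopes induced by $\phi$, and therefore the isotopy class of $\phi(\gamma')$ in terms of that of $\gamma'$; this is the content of the theorem.

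The payoff advertised just before the statement is then immediate. If $\phi$ is pseudo-Anosov it fixes no isotopy class of essential curve, so its action on $\mathbb{Q}\cup\{\infty\}$ has no fixed point; equivalently $\rho_3(\phi)|_{t=-1}$ is a hyperbolic element of $\mathrm{SL}_2(\mathbb{Z})$ (with $|\mathrm{tr}|>2$), in particular $\neq\mathrm{Id}$, so $\rho_3(\phi)\neq\mathrm{Id}$ and $\phi\notin\ker\rho_3$. Together with Lemma~\ref{B3-periodic} (periodic braids) and the reducible-braid lemma above, the Thurston--Nielsen trichotomy then yields $\ker\rho_3=\{1\}$, a second proof of Theorem~\ref{th:faithful}; and on pseudo-Anosov braids faithfulness is already visible after setting $t=-1$, because $B_3/\langle\Delta\rangle\cong\mathrm{PSL}_2(\mathbb{Z})$ acts faithfully on $\mathbb{Q}\cup\{\infty\}$ and the remaining power of $t$ is pinned down by the exponent sum.

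The main obstacle is the bookkeeping in the first paragraph: making the dictionary ``essential curve of $D_3\leftrightarrow$ slope'' genuinely equivariant for both actions in play --- the topological action of $B_3$ on curves and the linear action of $\rho_3|_{t=-1}$ on primitive homology classes of the torus --- and checking carefully that passing from $\rho_3(\phi)$ to $\rho_3(\phi)|_{t=-1}$ loses nothing relevant. The subtle point is that $\rho_3|_{t=-1}$ is itself \emph{not} faithful (its kernel contains $\Delta^2$), so faithfulness of the full $\rho_3$ is recovered only by pairing the curve argument with the centre computation $\rho_3(\Delta)=t^3\,\mathrm{Id}$ exactly as above: the scalar ambiguity $\pm t^k$ that is invisible on curves is precisely what is restored on passing back to the full Burau matrix, and it is what lets $\rho_3$ detect the centre. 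One could instead avoid the branched cover altogether and reconstruct $\phi(\gamma')$ on the infinite cyclic cover $\tilde D_3$ of \S2, from the class $\tilde\phi_*[\tilde\alpha_0]$ together with its intersection pairing against a fixed noodle arc --- the same computation that produces $\mathcal M_\phi(t)$ --- but the double-cover route is the shortest.
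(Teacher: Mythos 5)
Your proposal is correct in substance, and it reaches the theorem by a genuinely different route from the paper. The paper's proof is computational and normal-form based: it writes $\phi=\sigma_1^{a_1}\sigma_2^{b_1}\cdots\sigma_1^{a_n}$, defines the invariant $\pi(\phi)\in\mathbb{Q}\cup\{\infty\}$ as the continued fraction $[a_1,-b_1,\dots,a_n]$, quotes Series' Farey-graph/continued-fraction dictionary to say that $\pi(\phi)$ determines the isotopy class of $\phi(\gamma')$, and then observes that $\pi(\phi)$ is exactly what the matrix $S^{a_1}T^{b_1}\cdots S^{a_n}=\rho_3(\phi)|_{t=-1}$ computes as a linear fractional transformation. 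You instead build the curve--slope dictionary geometrically: cap $\partial D$ to get $S_{0,4}$ (killing the centre, generated by the boundary twist, which in the notation of Lemma 4.2 is $\Delta$ and in Section 6 is $\Delta^2$), pass to the hyperelliptic torus cover, identify essential curves with slopes (primitive classes in $H_1(T^2)$ up to sign), and identify the induced action with the integral Burau representation at $t=-1$. This is in effect the Birman--Hilden picture that the paper itself sets up in Section 5 but does not use for this theorem; it buys you a manifestly $B_3$-equivariant statement without choosing a normal form or doing continued-fraction bookkeeping, at the cost of the covering-space equivariance checks you flag (which Section 5 of the paper essentially supplies). Both arguments ultimately rest on the same fact: essential curves in $D_3$ correspond to $\mathbb{Q}\cup\{\infty\}$ and $B_3$ acts on them through its image in $\mathrm{PSL}_2(\mathbb{Z})$, which is Burau at $t=-1$ modulo scalars, with the scalar $\rho_3(\Delta)=t^3 I$ restoring detection of the centre. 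One small overstatement in your payoff paragraph: ``no fixed point in $\mathbb{Q}\cup\{\infty\}$'' is not equivalent to hyperbolicity, since elliptic elements of $\mathrm{PSL}_2(\mathbb{Z})$ also fix no rational slope (it is true that pseudo-Anosov $3$-braids have hyperbolic image, but that needs the extra remark that elliptic images pull back to periodic braids). For your argument this is harmless: all you need is that a pseudo-Anosov $\phi$ moves $\gamma'$, hence has nontrivial image in $\mathrm{PSL}_2(\mathbb{Z})$ because the trivial element fixes every slope, and therefore $\rho_3(\phi)\neq I$ --- which is precisely the conclusion the paper draws.
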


\begin{proof}

We point out that by showing that the action of $\phi\in B_3$ on the essential closed curve $\gamma'$ is encoded in its image under $\rho_3$, we also show that that if $\phi$ is pseudo-Anosov (and therefore not reducible), it has a non-trivial image under the Burau representation. 

\begin{figure}[htpb!]
    \centering
    \begin{overpic}[scale=.9]{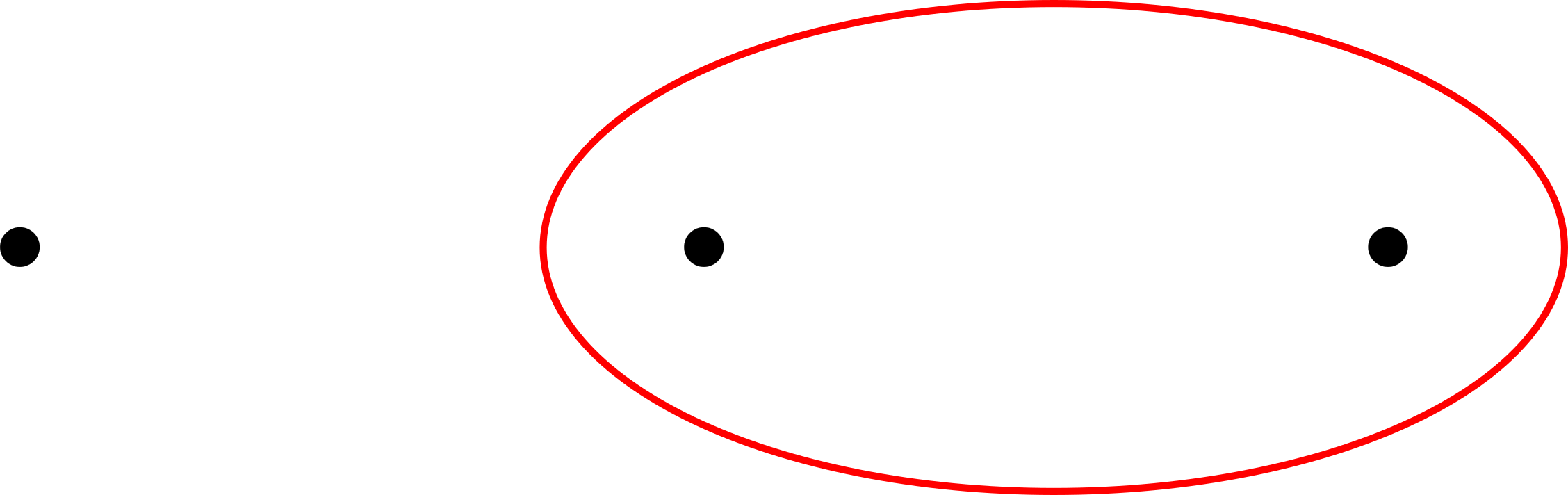}
    \end{overpic}
    \caption{If the braid $\phi$ is pseudo-Anosov, then it does not fix $\gamma'$}
\end{figure}

The key tool in this proof will be a new application of a well-known invariant of the mapping class group of a 3-times punctured disc. See the discussion about the Farey graph in \cite{Ser85}, for example.  The invariant, defined below, will be a complete invariant of the curve $\phi(\gamma')$ as well as define an invariant in the image $\rho_3(B_3)$, thus proving that $\rho_3(\phi)$ is non-trivial.

Without loss of generality, we may assume that $\phi=\sigma_1^{a_1}\sigma_2^{b_1}\ldots\sigma_1^{a_n}$, and that each $|a_i|, |b_i| >0$; this expression allows us to define a map $\pi: B_3 \to \mathbb{Q}\cup\{\infty\}$ that takes $\phi$ to the continued fraction $[a_1, -b_1, \ldots, -b_{n-1}, a_n]$, i.e.
\begin{align*}
\pi(\phi) = a_1+\cfrac{1}{-b_1+\cfrac{1}{a_2+\cfrac{1}{\ddots+\cfrac{1}{a_n}}}}
\end{align*}

Since the braid relation corresponds to equality in $\mathbb{Q}\cup\{\infty\}$ under $\pi$, the map gives a 3-braid invariant as desired. The following property of $\pi$ is well-known:
\begin{proposition}
The value $\pi(\phi)\in \mathbb{Q}\cup\{\infty\}$ completely defines the isotopy class of the curve $\phi(\gamma')$.
\end{proposition}

Then, if $\phi$ is pseudo-Anosov and does not fix $\gamma'$, the rational number $\pi(\phi)$ is non-zero. 

In order to see that this implies $\rho_3(\phi)\neq I$, we first recall the well-known fact that the braid group $B_3$ is the universal central extension of the modular group $\text{PSL}(2, \mathbb{Z})$.  In fact, the natural homomorphism from $B_3 \to \text{PSL}(2, \mathbb{Z})$ factors through the reduced Burau representation, with $\rho_3$  taking the generators of $B_3$, $\sigma_1$ and $\sigma_2$, to the following matrices, which when evaluated at $t=-1$, lift to $S$ and $T$, generators of PSL$(2, \mathbb{Z})$: 

\begin{align*} 
\rho_3(\sigma_1) = \begin{pmatrix}-t & 1\\ 0 & 1 \end{pmatrix}, \;
\rho_3(\sigma_2) = \begin{pmatrix} 1 & 0\\ t & -t \end{pmatrix}
\end{align*}

\begin{align*}
S = \begin{pmatrix}1 & 1\\ 0 & 1 \end{pmatrix}, \; 
T = \begin{pmatrix} 1 & 0\\ -1 & 1 \end{pmatrix}
\end{align*}

Now, recall that the group $\text{PSL}(2, \mathbb{Z})$ can be viewed as the group of integral linear fractional transformations $\mathbb{Q}\cup\{\infty\}\to\mathbb{Q}\cup\{\infty\}$ given by $z\to \frac{az+b}{cz+d}$, where $a,b,c,d \in \mathbb{Z}.$ Then, any continued fraction expansion $[c_0,  \ldots, c_n]$ induces a function $\mathbb{Q}\cup\{\infty\}\to\mathbb{Q}\cup\{\infty\}$ corresponding to an element of $\text{PSL}(2, \mathbb{Z})$ by taking $z \to [c_0, . . . , c_n + z]$. 

We can further note (for example through the results in \cite{Ser85}) that the continued fraction $\pi(\phi)=[a_1, -b_1, \ldots, a_n]$ corresponds exactly to the element $S^{a_1}T^{b_1}\ldots S^{a_n}$, i.e. the image of $\rho_3(\phi)$ evaluated at $t=-1$. Therefore, if $\phi$ is pseudo-Anosov, and $\pi(\phi)\neq 0$, the matrix $\rho_3(\phi)$ is non-trivial. \end{proof}

\section{Open problems}

We will end this note by pointing out some other possible questions for further research, relating to the Gassner and Burau representations of $B_n$ for various values of $n$.

\begin{open-problem}{1}\normalfont
At which specialisations of $t$ is $\rho_3$ faithful? What about $\tau_n$? Are there any faithful real or rational specialisations of $t_1,\ldots, t_n$ for $\tau_n$?
\end{open-problem}

We point out that in \cite{scherich}, the {\it real} discrete specializations of the indeterminate $t$ at which $\rho_3$ is faithful are completely classified. The question regarding rational specialisations of $\tau_n$ seems to be very pertinent as the question of whether or not the braid groups are $\mathbb{Q}$-linear is wide open; it seems possible that a more sophisticated version of our argument could be used to show that the Gassner polynomial does not have roots at $t_1=p_1^{-1}, \ldots, t_n=p_n^{-1}$ for $p_1,\ldots,p_n$ prime numbers. 

\begin{open-problem}{2}\normalfont

Is $\rho_4\otimes \mathbb{Z}_p$ faithful for any $p\geq 5$?
\end{open-problem}

\begin{open-problem}{3}\normalfont
In \cite{church-farb} it was shown that $\ker(\rho_n)$ is infinitely generated for $n\geq 6$. For which values of $n, p$ is $\ker(\rho_n\otimes \mathbb{Z}_p)$ infinitely generated? Can you give a geometric description of the kernel using the Burau polynomial? 
\end{open-problem}


\begin{open-problem}{4}\normalfont
Generalise \cite{CL97}'s presentation for $\ker(\rho_n\otimes \mathbb{Z}_2)$ to $\ker(\rho_n\otimes \mathbb{Z}_p),p>2$. 
\end{open-problem}

\begin{open-problem}{4}\normalfont
What forms can the crossing puncture sequences $C_1, \ldots, C_m$ take for higher $n$? Is this question  decidable?
\end{open-problem}

\bibliography{bib.bib}{}
\bibliographystyle{alpha}

\textsc{Vasudha Bharathram\\
Student, Princeton University\\
6484 Frist Campus Center \\
Princeton, NJ 08544, USA}\\
vb8046@princeton.edu

\end{document}